\newtheorem{thm}{Theorem}[section]
\newtheorem{cor}[thm]{Corollary}
\newtheorem{prop}[thm]{Proposition}
\theoremstyle{remark}
\theoremstyle{definition}
\newtheorem{dfn}[thm]{Definition}
\newtheorem{q}[thm]{Question} 
\newtheorem{def/prop}[thm]{Definition/Proposition}
\numberwithin{equation}{section}
\newcommand{\red}{}     
 \newcommand{\cyan}{}
\newcommand{\rs}[1]{} \newcommand{\rma}[1]{}
\def\tr{\mathop{\mathrm{tr}}\nolimits}
\def\SL{\mathop{\mathrm{SL}}\nolimits}
\def\mod{\mathop{\mathrm{mod}}\nolimits}
\newcommand{\bb}[1]{{\mathbb{#1}}}
\newcommand{\mca}[1]{{\mathcal{#1}}}
\newcommand{\surj}{\twoheadrightarrow}
\newcommand{\N}{\bb{N}}
\newcommand{\Z}{\bb{Z}}
\newcommand{\Zp}{\bb{Z}_{p}}
\newcommand{\Q}{\bb{Q}}
\newcommand{\R}{\bb{R}}
\newcommand{\C}{\bb{C}}
\newcommand{\ol}{\overline}
\newcommand{\wt}[1]{{\widetilde{#1}}}
\newcommand{\wh}[1]{{\widehat{#1}}}
\DeclareMathOperator*{\restprod}%
 {\mathchoice{\ooalign{\ensuremath{\displaystyle\prod}\crcr\ensuremath{\displaystyle\coprod}}}%
             {\ooalign{\ensuremath{\textstyle\prod}\crcr\ensuremath{\textstyle\coprod}}}%
             {\ooalign{\ensuremath{\scriptstyle\prod}\crcr\ensuremath{\scriptstyle\coprod}}}%
             {\ooalign{\ensuremath{\scriptscriptstyle\prod}\crcr\ensuremath{\scriptscriptstyle\coprod}}}%
 }
\newcommand{\pmx}[1]{\begin{pmatrix}#1\end{pmatrix}}
\newcommand{\spmx}[1]{{\small \pmx{#1}}}
\title 
[Modular knots obey the Chebotarev law]
{Modular knots obey the Chebotarev law}
\author{Jun Ueki}
\email{uekijun46@gmail.com}
\address{Department of Mathematics, Faculty of Science, Ochanomizu University; 2-1-1 Otsuka, Bunkyo-ku, 112-8610, Tokyo, Japan
} 
\subjclass[2020]{Primary 37D99, 57K99, 57M99; Secondary 11N05}
\keywords{Dehn surgery, Anosov flow, Chebotarev law, modular knots, Rademacher symbol, arithmetic topology}
\begin{document}

\begin{abstract} 
We study knots which behave like prime numbers. 
We discuss the planetary link raised from a hyperbolic fibered link in $S^3$ with an emphasis on surgeries, 
point out certain subtleness, and refine the construction. 
In addition, we point out a version of McMullen's theorem for the cases over cusped orbifolds and deduce that 
the family of modular knots around any torus knot $K_{a,b}$ in $S^3$ together with the missing knot $K_{a,b}$ obey the Chebotarev law. 
Furthermore, we 
attach several remarks in the view of arithmetic topology. 
\end{abstract} 


\maketitle 

{\small 
\tableofcontents 
}

\section{Introduction}
The analogy between knots and primes has played important roles from the era of Gauss. 
In modern times, the analogy was initially pointed out by Mazur \cite{Mazur1963} with an emphasis on Iwasawa theory and Alexander--Fox theory, and has been systematically and vastly developed by Kapranov, Reznikov, Morishita, Kim, and others \cite{Kapranov1995, Reznikov1997, Reznikov2000, Morishita2002, Morishita2012, MKim2020}. 
This research field is called \emph{arithmetic topology}. 
Mazur \cite{Mazur2012} further asked if there exists a sequence of knots in $S^3$ that behave like the set of all prime numbers in $\Z$, namely, that obeys the Chebotrev law. Here is the definition: 
\begin{dfn}[(The Chebotarev law)] \label{def.Chebotarev}
Let $(K_i)=(K_i)_{i\in \N_{>0}}$ be a sequence of disjoint knots in a 3-manifold $M$. 
For each $n\in \N_{>0}$ and $j>n$, put $L_n=\cup_{i\leq n}K_i$ 
and let $[K_j]$ denote the conjugacy class of $K_j$ in $\pi_1(M-L_n)$. 
We say that $(K_i)$ \emph{obeys the Chebotarev law} if the density equality 
\[\lim_{\nu \to \infty} \frac{\#\{n<j\leq \nu\mid \rho([K_j])=C\}}{\nu}=\frac{\#C}{\#G}\]
holds for any $n\in \N_{>0}$, any surjective homomorphism $\rho:\pi_1(M-L_n)\to G$ to any finite group, and any conjugacy class $C\subset G$. 
\end{dfn} 
In this view, McMullen proved the following highly interesting theorem, 
generalizing Adachi--Sunada's result  \cite[Proposition II-2-12]{Sunada1984ASPM}. 
\begin{thm}[{\cite[Theorem 1.2]{McMullen2013CM}}] \label{thm.McM}
Let $(K_i)$ be the closed orbits of any topologically mixing pseudo-Anosov flow on a closed 3-manifold $M$, ordered by length in a generic metric. Then $(K_i)_i$ obeys the Chebotarev law.  
\end{thm} 
We consult \cite{Mosher1992Duke, Fenley2008, DCalegari2007book} for the terminology of pseudo-Anosov flows. 
A key of the proof is the following: since such flows admit Markov partitions \cite{FathiShub1979}, the flow dynamics admit symbolic encodings, so 
the theory of 
symbolic dynamics (cf.~\cite{ParryPollicott1990}) applies. 

By using this theorem, McMullen pointed out that \emph{the family $\mca{L}$ of closed orbits of the monodromy suspension flow of the figure-eight knot $K=4_1$ together with $K$ in $S^3$ obeys the Chebotarev law} if ordered by length \cite[Corollary 1.3]{McMullen2013CM}, answering Mazur's question. 
He also claimed that the same construction works for any hyperbolic fibered knot in $S^3$. 

However, there is something subtle to care about, even in the case of $4_1$. 
In the former half of this article, we aim to verify his claim, as well as to improve the author's argument in the previous paper \cite[Propositions 12, 15]{Ueki7}. 
Namely, we re-prove the following assertion.  

\begin{thm} \label{thm.Cheb} 
{\rm (1)} Let $L$ be a fibered hyperbolic finite link in $S^3$ and let $\mca{L}$ denote the union of the closed orbits of the suspension flow of the monodromy map and $L$ itself. 
Then the components of $\mca{L}$ obey the Chebotarev law if ordered by length in a generic metric. 

{\rm (2)} For any finite branched cover $h:M\to S^3$ branched along a finite sublink of $\mca{L}$, 
the inverse image $h^{-1}(\mca{L})$ obey the Chebotarev law, if ordered by length in a generic. 
\end{thm} 

For this purpose, in \Cref{section.Dehn}, we first recall his construction with an emphasis on Dehn surgeries and 
observe how subtle things may occur. 
Afterward, in \Cref{sec.gen.pA}, we introduce the notion of a \emph{generalized pseudo-Anosov flows}, which allows 1-pronged singular orbits, 
invoke \emph{rational Fried surgeries}, 
and establish a version of \Cref{thm.McM} for such flows (\Cref{thm.gpA}). 
Finally, we prove \Cref{thm.Cheb} in \Cref{section.planetary}. 

We remark that the Chebotarev link $\mca{L}$ in \Cref{thm.Cheb} is also called \emph{the planetary link} raised from $L$ and sometimes admits another amazing property due to Ghrist and others \cite{Ghrist1997, GhristHolmesSullivan1997book, GhristKin2004}; 
\emph{this link $\mca{L}$ contains every type of links}. 
We also attach a fused question.\\

\emph{The modular knots} around the trefoil $K_{2,3}=3_1$ in $S^3$ is another famous example of infinite links of deep features, with a close connection to Lorenz knots 
(cf.~\cite{BirmanWilliams1983CM,Tucker1999CRASP, Ghys2007ICM}, 
see \Cref{section.modular} for more details). 
Since the planetary link raised from $4_1$ and the family of modular knots are like siblings, 
it is desirable to have a similar result for modular knots. 
However, each of them has its specific nature and requires slightly different arguments. 
In this view, we would like to point out the following partial extension of \Cref{thm.McM}. 
\begin{thm} \label{thm.cusped}
{\rm (1)} Let $\Gamma\leq {\rm SL}_2\R$ is any Fuchsian group, so that the orbifold $\Sigma=\Gamma\backslash\bb{H}$ may have cusps. 
Let $L$ be a finite link in a closed 3-manifold $M$ and suppose that the exterior $M-L$ is the unit tangent bundle $T_1\Sigma$. 
Then, the closed orbits of (a lift of) the geodesic flow and the missing link $L$ obey the Chebotarev law, if ordered by length in a generic metric.\\ 
\ \ {\rm (2)} If instead $M-L$ is a finite cover of $T_1\Sigma$, then a similar result holds. 
\end{thm}

Now let $(a,b)$ be any pair of coprime integers and let $\Gamma_{a,b}\leq {\rm SL}_2\R$ denote the $(a,b,\infty)$-triangle group. Then a \emph{modular knot} is defined to be a closed orbit of a standard flow called \emph{the geodesic flow} on (i) the unit tangent bundle of the orbifold $\Gamma_{a,b}\backslash \bb{H}$, which is homeomorphic to the exterior $L(a,b)-\ol{K}_{a,b}$ of a certain knot in the lens space, or (ii) its certain finite cover homeomorphic to $S^3- K_{a,b}$, where $K_{a,b}$ is the $(a,b)$-torus knot. 
More precisely, we have $\pi_1L(a,b)=\Z/r\Z$ with $r=ab-a-b$ and the corresponding $\Z/r\Z$-cover is $S^3-K_{a,b}\to L(a,b)-\ol{K}_{a,b}$ (cf.~\cite{Tsanov2013EM}). 
The classical case is $\Gamma_{2,3}={\rm SL}_2\Z$, $K_{2,3}=$ (trefoil), and $r=1$. 

\Cref{thm.cusped} particularly yields the following.  
\begin{thm} \label{thm.Lorenz} For any pair $(a,b)$ of coprime integers, 
{\rm (i)} the family of modular knots in $L(a,b)-\ol{K}_{a,b}$ and the missing knot $\ol{K}_{a,b}$ 
and 
{\rm (ii)} the family of modular knots in $S^3-K_{a,b}$ and the missing torus knot $K_{a,b}$ 
obey the Chebotarev law, if ordered by length in a generic metric. 
\end{thm} 

As a corollary, we obtain an equidistribution formula for a certain ubiquitous function called \emph{the Rademacher symbol} $\Psi:\Gamma_{2,3}=\SL_2\Z\to\Z$ (Corollary \ref{cor.Rademacher}). 
This may be seen in contrast with Sarnak--Mozochi's another distribution theorem. 
See \Cref{section.Rademacher} for the details.\\ 

\emph{The idelic class field theory} for number fields developed by Artin--Takagi and Chevalley sums up all local theories associated to primes to control the global abelian extensions with ramifications. 
Its topological analogue for a 3-manifold endowed with a nice infinite link 
has been developed in the series of works \cite{Niibo1, NiiboUeki, Mihara2019Canada, KimMorishitaNodaTerashima2021, NiiboUeki2023RMS, Tashiro2024-arXiv, Tashiro2025-arXiv}. 
As proved in \cite{Ueki7} (see also the latest survey \cite[Section 7]{Morishita2024}), 
a link that obeys the Chebotarev law is \emph{stably generic} 
in the sense of Mihara \cite{Mihara2019Canada} 
and guarantees a certain functionality. 
In \Cref{sec.remarks}, we remark further possible interests of Chebotarev links, such as the Lang--Trotter conjecture and the Neukirch--Uchida theorem, from the viewpoint of arithmetic topology.

\section{McMullen's construction and Dehn surgeries} \label{section.Dehn} 
Let us recall McMullen's construction. 
The mapping torus of $A=\spmx{1&1\\1&2}$ with $\tr A>2$ acting on $T=\R^2/\Z^2$ is the result $K(0)$ of the 0-surgery along the figure-eight knot $K=4_1$ in $S^3$ \cite[Chapter 5]{BurdeZieschang2014}, so that the monodromy suspension flow is a pseudo-Anosov flow on the torus bundle $K(0)$ such that the exterior of the $0$-orbit is homeomorphic to $S^3-K$. 
By \cite[Corollary 2.2]{McMullen2013CM}, the flow is topologically mixing if there are two orbits $K_i$ and $K_j$ whose length satisfy $\ell(K_i)/\ell(K_j)\not \in \Q$, hence a generic time change makes the pseudo-Anosov flow to be topologically mixing. 
Therefore by \cite[Theorem 1.2]{McMullen2013CM}, the closed orbits in $K(0)$ obey the Chebotarev law. 
Since the Chebotarev law persists under Dehn surgeries along knots, the $\infty$-surgery along the $0$-orbit yields a sequence of knots in $S^3$ containing $K$ itself and obeying the Chebotarev law. 

Here we attach a proof of a basic fact used in the construction: 

\begin{prop} \label{prop.Dehn}
The Chebotarev law persists under Dehn surgeries along knots. 
\end{prop}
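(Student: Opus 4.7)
My plan is to deduce the Chebotarev law for the sequence in the surgered manifold from the Chebotarev hypothesis already known in the original manifold, by way of a Seifert--van Kampen computation that relates the two relevant fundamental groups. Without loss of generality suppose the Dehn surgery is performed along $K_{1}$; call the resulting manifold $M'$ and reindex the remaining sequence as $K_{i}'=K_{i+1}$, so that $L_{n}'=K_{2}\cup\cdots\cup K_{n+1}$ as a link in $M'$.

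The first step would be to establish the topological identification
\[
M'-L_{n}' \;=\; (M-L_{n+1})\,\cup_{T}\, V,
\]
where $T$ is the boundary torus of a tubular neighborhood of $K_{1}$ in $M$ and $V$ is the solid torus glued in by the surgery. Since $\pi_{1}(V)\cong\Z$ and the kernel of the induced map $\pi_{1}(T)\to\pi_{1}(V)$ is generated by the surgery slope $\gamma\in\pi_{1}(T)$, Seifert--van Kampen then yields
\[
\pi_{1}(M'-L_{n}') \;\cong\; \pi_{1}(M-L_{n+1})\,\big/\,\langle\!\langle\gamma\rangle\!\rangle.
\]
Any surjection $\rho'\colon\pi_{1}(M'-L_{n}')\twoheadrightarrow G$ therefore pulls back along the quotient map to a surjection $\rho\colon\pi_{1}(M-L_{n+1})\twoheadrightarrow G$, and by naturality of the inclusions $K_{j}\hookrightarrow M-L_{n+1}\hookrightarrow M'-L_{n}'$ one has $\rho'([K_{j}'])=\rho([K_{j+1}])$ for every $j>n$.

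To conclude, I would apply the Chebotarev hypothesis for $(K_{i})$ in $M$ to the triple $(n+1,\rho,C)$, giving
\[
\lim_{\nu\to\infty}\frac{\#\{\,n+1<k\le\nu\mid\rho([K_{k}])=C\,\}}{\nu} \;=\; \frac{\#C}{\#G},
\]
and then perform the change of variables $k=j+1$, $\nu=\nu'+1$. Since $(\nu'+1)/\nu'\to 1$, the desired density equality for $(K_{i}')$ in $M'$ with parameters $(n,\rho',C)$ follows at once.

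The one point that I expect to deserve genuine care is the Seifert--van Kampen step, and in particular the assertion that Dehn filling a torus boundary amounts to quotienting by the normal closure of the surgery slope; once this is in hand, the remainder of the argument is routine reindexing that does not affect the density limit. Surgery performed along some $K_{i_{0}}$ with $i_{0}>1$ is handled by the same argument after a finite shift of indices, and the case of two distinct surgery slopes (or no surgery at all, i.e.\ the meridional $\infty$-filling) is covered uniformly by the same formula for $\gamma$.
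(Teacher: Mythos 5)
Your mechanism is the right one and is essentially the paper's: the paper's proof considers the composition $\wt{\rho}\colon \pi_1(M-L_N)=\pi_1(M'-L_N)\surj \pi_1(M'-L_n)\overset{\rho}{\surj}G$, and the surjection in the middle is exactly the quotient by the normal closures of the surgery slopes that you compute via Seifert--van Kampen. Pulling $\rho$ back to a surjection upstairs, matching conjugacy classes for all but finitely many indices, and noting that a finite shift does not affect the density limit is precisely how the paper concludes in the nontrivial case.

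There is, however, one substantive mismatch. You delete the surgered knot from the sequence ($K_i'=K_{i+1}$), whereas the proposition is used in the paper with the core of the filling solid torus retained as a term of the sequence: the whole point of the construction is that the $\infty$-surgery yields a Chebotarev sequence in $S^3$ \emph{containing} the figure-eight knot (resp.\ the missing trefoil) itself. Your argument does not cover the sets of data $(n,\rho,C)$ in which $L_n$ contains the core $K_1'$ of $V$, and these are not reachable by your quotient map: $\pi_1(M'-(K_1'\cup K_2\cup\dots\cup K_n))$ is \emph{not} a quotient of $\pi_1(M-L_n)$ by the slope --- deleting the core undoes the filling, so this group is canonically isomorphic to $\pi_1(M-L_n)$ itself, and it admits surjections that do not factor through $\pi_1(M'-L_n')$. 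The omission is easy to repair: when the surgery link is contained in $L_n$ one simply has $M-L_n=M'-L_n$ and the density equality transfers verbatim; this trivial case is the first half of the paper's proof, and your Seifert--van Kampen computation is the second half (the case where some surgered component lies outside $L_n$). You should also say a word about surgery along several components at once (the paper allows a sublink of $L_N$), though iterating your single-knot argument handles that.
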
 
\begin{proof} 
Let $(K_i)_i$ be a sequence of disjoint knots in $M$ obeying the Chebotarev law 
and let $M'$ denote the result of Dehn surgeries along a link contained in $L_N$ for $N\in \Z_{>0}$. 
If $N \leq n$, then we have $M-L_n=M'-L_n$, hence the density equality persists. 
If $N>n$, consider the composition \[\wt{\rho}:\pi_1(M-L_N)=\pi_1(M'-L_N)\surj \pi_1(M'-L_n) \overset{\rho}{\surj} G.\] 
Since we have $\rho([K_j])=\wt{\rho}([K_j])$ for any $j\geq N$, the density equality for $\wt{\rho}$ yields that for $\rho$. 
\end{proof}

If we try exactly the same construction for any other hyperbolic fibered knot or link in $S^3$, 
we should notice that the $0$-surgery is not always hyperbolic.
If the result of $0$-surgery is hyperbolic, then in general we do not necessarily obtain a pseudo-Anosov flow on a closed 3-manifold. 

Indeed, Gabai \cite[p.27]{Gabai1997problems} initially pointed out by using Penner's program that the 0-surgery along $K=8_{20}$ 
results in a fibered manifold with reducible monodromy, 
so that the mapping torus is toroidal, hence not hyperbolic. 
This fact is known as the consequence of the fact that the complement of $8_{20}$ had an essential punctured torus with boundary slope 0 \cite[p.479]{HatcherOertel1989}. 
Here for the convenience of readers, we give a slightly generalized assertion (Proposition 6) suggested by Motegi. 
\red{\emph{The crosscap number} of a knot in the 3-sphere is defined as the minimal first Betti number of non-orientable subsurfaces bounded by the knot.}  
Note that $K=8_{20}$ is the pretzel knot $P(2,3,-3)$ and its crosscap number is two \cite[Theorem 1.2]{IchiharaMizushima2010TIA}. 

\begin{prop} \label{prop.toroidal}
The 0-surgery along a hyperbolic knot $K$ with the crosscap number two in $S^3$ results in a toroidal 3-manifold. 
\end{prop}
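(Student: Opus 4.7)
The plan is to exhibit an essential torus in $K(0)$ by capping off a once-punctured Klein bottle bounded by $K$. Since $K$ has crosscap number two, $K$ bounds a non-orientable surface $F\subset S^3$ with $\chi(F)=-1$, namely a once-punctured Klein bottle. After suitable compressions and $\partial$-compressions we may assume $F_0:=F\cap E(K)$ is essential (incompressible and $\partial$-incompressible) in the knot exterior $E(K)=S^3\setminus \mathrm{int}\,N(K)$; this is possible because $K$ is non-trivial and so the process must terminate at a surface of the same crosscap number.

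The main step, and the chief obstacle, is to show that $F$ may be chosen with boundary slope zero on $\partial N(K)$, i.e., with $\partial F$ a Seifert longitude of $K$. I expect this to rely on a structural result on essential non-orientable spanning surfaces realizing the crosscap number of a hyperbolic knot, in the vein of the work of Ichihara--Mizushima on crosscap-two surfaces, which ensures that among the candidates of $F$ one can find one whose boundary slope is longitudinal. All subsequent steps are essentially formal once this is secured.

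Granted slope zero, write $K(0)=E(K)\cup V$ with $V$ the surgery solid torus glued along slope zero. Then $\partial F$ is isotopic to the meridian of $V$ and hence bounds a meridian disk $D\subset V$; capping off yields a closed non-orientable surface $\widehat{F}:=F_0\cup D\subset K(0)$ with $\chi(\widehat{F})=0$, that is, a Klein bottle. Since $K(0)$ is orientable and $\widehat{F}$ is not, a regular neighborhood $N(\widehat{F})$ is a twisted $I$-bundle, and its frontier $T:=\partial N(\widehat{F})$ is the orientation double cover of $\widehat{F}$, which is a torus.

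It remains to verify that $T$ is essential in $K(0)$. It is not boundary-parallel because $K(0)$ is closed. For incompressibility, a compressing disk for $T$, pushed through the twisted $I$-bundle over $\widehat{F}$, would yield either a compression of $F_0$ in $E(K)$ (contradicting essentialness of $F_0$) or a $2$-sphere meeting $\widehat{F}$ transversely in a single point, which is impossible since $K(0)$ is irreducible by Gabai's theorem applied to the non-trivial knot $K$. Hence $T$ is essential and $K(0)$ is toroidal.
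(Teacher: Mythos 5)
Your construction is essentially the same as the paper's: both produce the essential torus as the frontier of a twisted $I$-bundle neighborhood of the Klein bottle obtained by capping off, inside $K(0)$, a once-punctured Klein bottle spanning surface with longitudinal boundary, and both rule out outside compressions using Gabai's irreducibility of $0$-surgeries together with $H_1(K(0))\cong\Z$. The only notable difference in the second half is that the paper's compressibility analysis is carried further: if the torus compresses in the complementary piece $Y$, the paper splits into the cases $Y$ irreducible (forcing $Y$ to be a solid torus, hence $K(0)$ a prism manifold or $\R{\rm P}^3\#\R{\rm P}^3$) and $Y$ reducible, each contradicting $H_1(K(0))\cong\Z$ or Gabai. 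Your dichotomy ``compression of $F_0$ or a sphere meeting $\wh{F}$ once'' is a sketch of the same idea and would need to be fleshed out along those lines; also, a compressing disk inside the twisted $I$-bundle is most cleanly excluded by $\pi_1$-injectivity of the orientation double cover, not by pushing it down to $F_0$.

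The genuine gap is exactly the step you flag and then defer: the existence of a once-punctured Klein bottle spanning $K$ whose boundary is the preferred longitude. You leave this as an expectation to be extracted from Ichihara--Mizushima-type results, so the proposal does not close. Be aware that this is not a formality: the boundary slope of a non-orientable spanning surface is an even integer that need not vanish, and the figure-eight knot already shows the desired surface can fail to exist --- $4_1$ has crosscap number two, yet its once-punctured Klein bottles (which are automatically incompressible because the crosscap number is two) have boundary slopes $\pm 4$, not $0$, by Hatcher--Thurston; the conclusion of the proposition still holds for $4_1$, but only because the fibre torus of the Sol manifold $4_1(0)$ supplies an incompressible torus by other means. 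So any complete argument must either justify the slope-$0$ claim for the class of knots under consideration, or handle the exceptional cases separately. For what it is worth, the paper's own proof asserts the longitudinal once-punctured Klein bottle with no further justification, so your write-up is more candid about where the real content lies; but as a proof it is incomplete at precisely that point.
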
 

\begin{proof} 
Let ${\rm Int}V_K$ denote the interior of a tubular neighborhood of $K$. 
Then in the exterior $S^3-{\rm Int}V_K$ of $K$, we have a once punctured Klein bottle $\Sigma$ whose boundary is a preferred longitude of $K$. 
A tubular neighborhood of $\Sigma$ in the exterior is a twisted $I$-bundle over $\Sigma$, and 
the $\partial I$-subbundle is a twice punctured torus with its boundary components again being preferred longitudes of $K$. 
Hence in the result $K(0)$ of the $0$-surgery along $K$, we have a twisted $I$-bundle $X$ over a Klein bottle $\wh{\Sigma}$, 
in which the $\partial I$-subbundle is an incompressible torus $T$. 

Consider the decomposition $K(0)=X\cup Y$ with $X\cap Y=T$. 
If $T$ is incompressible in $Y$ as well, then so it is in $K(0)$, and hence $K(0)$ is toroidal. 
Let us suppose that $T$ is compressible in $Y$ to deduce contradictions; 
If $Y$ is irreducible, then $Y$ is a solid torus, hence $K(0)$ is a prism manifold or $\R{\rm P}^3\# \R{\rm P}^3$. 
In the former case, $\pi_1(K(0))$ is a finite group, contradicting $H_1(K(0))\cong \Z$. 
In the latter case, $K(0)$ is reducible, contradicting Gabai's great result \cite[Corollary 8.3]{Gabai1987JDG-III}
\cyan{asserting that the result of the zero surgery along a knot in $S^3$ is a prime manifold.}  
If $Y$ is reducible, then $Y$ is the connected sum $V\# Z$ of a solid torus $V$ and some closed 3-manifold $Z\neq S^3$. Hence we have $K(0)=W\# Y$ for a prism manifold $W$ or $K(0)=\R{\rm P}^3\# \R{\rm P}^3\# Z$, 
again contradicting Gabai's result. 
\end{proof}

Examples of hyperbolic fibered knots such that the $0$-surgeries yield Seifert fibered spaces with periodic monodromies were given in \cite{MotegiSong2005AGT} (see also \cite{IchiharaMotegi2009QJM}). 

\section{Generalized pseudo-Anosov flows and rational Fried surgeries} \label{sec.gen.pA}

Recall that \emph{a pseudo-Anosov flow} admits the stable/unstable foliations with a finite number of $k$-pronged singular orbits with $k\geq 3$ and, away from them, it is just an Anosov flow. 
\emph{A generalized pseudo-Anosov flow}, which is also called a 1-pronged pseudo-Anosov flow or a singular Anosov-flow, may admit a finite number of 1-pronged singular orbits and, away from them, it is a pseudo-Anosov flow. 
\red{The nature of generalized pseudo-Anosov flows is less known, though there are several interesting examples, such as the one in $S^1\times S^2$ and others described in \cite{BarbotFenley2013GT}.} 

One \cyan{might} expect the following:  
The whole argument for pseudo-Anosov flows in \cite{FathiPoenaru1979} persists, 
a generalized pseudo-Anosov flow admits \cyan{a version of 
\emph{Markov partition}}, 
the proof of \cite[Theorem 1.2]{McMullen2013CM} (contained in Proof of \cite[Theorem 1.4]{McMullen2013CM}) works as well, 
and we have a version of \Cref{thm.McM} for a generalized pseudo-Anosov flow. 
Instead of precisely verifying this argument, we invoke the notion of \emph{rational Fried surgeries} to prove the assertion. 

\cyan{We first recollect the classical case.} 
Let $K$ be a closed orbit of an Anosov flow and $\sigma$ a \emph{cross section} \cyan{of $K$, namely, a circle on the boundary of the tubular neighborhood of $K$, that transversely intersects each of the stable/unstable flow lines twice.}  
Then a \emph{Fried surgery} along $(K,\sigma)$ naturally induces an Anosov flow on the result of the Dehn surgery whose slope coincides with that of $\sigma$. 
It is defined as the combination of the blow-up along $K$ and the blow-down by parallel copies of $\sigma$, 
so that the orbits of the flow and the stable/unstable foliations persist in the exterior \cite{Fried1983Topology}. 
We remark that Fried surgeries are topologically equivalent to Goodman surgeries \cite{Goodman1983GD, ShannonPhD}. 

Next, we demonstrate a version of \Cref{thm.McM} for a generalized pseudo-Anosov flow: 

\begin{thm} \label{thm.gpA} 
\emph{
Let $(K_i)_i$ be the closed orbits of any topologically mixing generalized pseudo-Anosov flow on a closed 3-manifold $M$, ordered by length in a generic metric. 
Then $(K_i)_i$ obeys the Chebotarev law.} 
\end{thm}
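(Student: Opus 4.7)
The plan is to follow McMullen's proof of \cite[Theorem 1.2]{McMullen2013CM} essentially line by line, checking that every ingredient goes through when we enlarge the class of admissible flows by allowing 1-pronged singular orbits. The skeleton of McMullen's argument has three parts: (i) construct a Markov section for the flow, which yields a symbolic coding via a suspension of a subshift of finite type; (ii) invoke the Parry--Pollicott equidistribution theorem for the resulting symbolic flow; (iii) translate the equidistribution into the statement that the $G$-conjugacy class of the holonomy $\rho([K_j])\in G$ of each closed orbit is equidistributed, for any quotient $\rho:\pi_1(M-L_n)\surj G$ onto a finite group. Thus the whole proof reduces to checking that the geometric input---the Markov section and the associated symbolic model---persists in the generalized setting.

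First, I would establish the existence of a Markov section for a generalized pseudo-Anosov flow $\varphi_t$ on a closed 3-manifold $M$. The construction of Fathi--Poenaru \cite{FathiPoenaru1979} (and the parallel constructions of Ratner and Bowen) proceeds by choosing a finite family of rectangles $R_1,\ldots,R_N$ transverse to $\varphi_t$ whose union meets every orbit within a uniformly bounded time, and then refining so that the Poincar\'e first-return map is Markov with respect to the stable/unstable foliations. A 1-pronged singular orbit perturbs this picture only at a single point of a single rectangle, where the local picture of the invariant transverse foliations is a 1-prong instead of a $k$-prong with $k\geq 3$; the foliations remain measured and singular with finitely many singularities, and the Fathi--Poenaru rectangle-refinement procedure depends only on the hyperbolicity of the flow away from its finitely many singular orbits. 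One therefore obtains a subshift of finite type $(\Sigma_A,\sigma)$ and a positive H\"older roof function $r$ whose suspension $\Sigma_A^r$ models $\varphi_t$ modulo the finite singular set, with primitive periodic orbits of period $\leq T$ corresponding bijectively to closed orbits of $\varphi_t$ of length $\leq T$ (up to the finitely many singular orbits).

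Second, I would apply McMullen's analysis verbatim. Topological mixing of $\varphi_t$ implies topological mixing of the symbolic suspension flow, which by Parry--Pollicott \cite{ParryPollicott1990} yields both the prime orbit theorem and the finer equidistribution of closed orbits in any finite cover. Given $\rho:\pi_1(M-L_n)\surj G$, the proof proceeds exactly as in the second half of McMullen's Proof of Theorem 1.4: one lifts the flow to the finite cover of $M$ corresponding to $\rho$, applies the symbolic equidistribution there, and reads off the density
\[\lim_{\nu\to\infty}\frac{\#\{n<j\leq\nu\mid\rho([K_j])=C\}}{\nu}=\frac{\#C}{\#G}\]
of Definition \ref{def.Chebotarev}. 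The finitely many singular orbits, together with the orbits comprising $L_n$, contribute only $O(1)$ terms and are absorbed harmlessly.

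The only serious obstacle is the first step: verifying that the Fathi--Poenaru construction of a Markov section really does tolerate 1-pronged singularities. Local orientability of the transverse foliations is altered near a 1-prong (a small loop around the singularity is now orientation-reversing), but this affects only the combinatorial gluing of rectangles along the singular fibre and not the existence of a finite Markov family, since shadowing and expansivity of $\varphi_t$ are retained away from the singular orbits. Once the Markov section is in hand, no further novelty is required and McMullen's argument transfers mechanically, producing Theorem \ref{thm.gpA}.
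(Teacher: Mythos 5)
Your proposal follows exactly the route the paper takes: the paper's entire proof is the observation that the Fathi--Poenaru construction of a Markov section applies unchanged to generalized pseudo-Anosov flows (1-pronged singularities notwithstanding), after which McMullen's proof of \cite[Theorem 1.2]{McMullen2013CM} (contained in his Proof of Theorem 1.4) via Parry--Pollicott goes through verbatim. Your expansion of why the Markov section survives the 1-pronged singular orbits is a faithful, more detailed version of the same one-line argument.
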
 

\begin{proof}[Proof of \Cref{thm.gpA}]
If we instead apply a similar procedure for a closed orbit \cyan{$K$} of a generalized pseudo-Anosov flow and a cross section $\sigma$ \cyan{of $K$} such that $\sigma$ transversely intersects each of the stable/unstable flow lines more than once, 
then we mostly obtain a $k$-pronged orbit with $k\geq 2$ and rarely a 1-pronged orbit in a new flow on the result of the corresponding rational Dehn surgery. 
Therefore, \cyan{by applying} the $0$-fillings, rational Fried surgeries, a generic time change, \cite[Theorem 1.2]{McMullen2013CM}, the $\infty$-Dehn surgeries, and Proposition \ref{prop.Dehn} \cyan{in this order}, 
then we obtain the assertion. 
\end{proof} 


\section{The planetary links} \label{section.planetary} 
We prove Theorem \ref{thm.Cheb} by using \Cref{thm.gpA} and \Cref{prop.Dehn}: 

\begin{proof}[{Proof of \Cref{thm.Cheb}}]
Let $L$ in $S^3$ be any fibered hyperbolic link other than $4_1$. 
Then Nielsen--Thurston's classification together with Thurston's theorem \cite[Proposition 2.6, Theorem 0.1]{WPThurston1986HS2} yield that the exterior $S^3-L$ is the mapping torus of a pseudo-Anosov map on a punctured surface and that the monodromy suspension flow is pseudo-Anosov. 

\cyan{The} pseudo-Anosov map on a punctured surface $\Sigma$ extends to a map on a closed surface $\wh{\Sigma}$ such that the stable/unstable foliations may admit 1-pronged singular closed orbits 
\cite{WPThurston1988BAMS, CassonBleiler1988book, FarbMargalit2012book}. 
The pseudo-Anosov suspension flow on the mapping torus of $\Sigma$ 
extends to a \emph{generalized pseudo-Anosov flow} on that of $\wh{\Sigma}$
such that the components of $L$ are contained in the set of the closed orbits. 
By \cite[Corollary 2.2]{McMullen2013CM} based on \cite[Theorem 8.5]{ParryPollicott1990}, a generic time change makes the flow topologically mixing. 
Hence Theorem \ref{thm.gpA} yields a sequence of knots obeying the Chebotarev law. 
Applying Proposition \ref{prop.Dehn} to the $\infty$-Dehn surgeries along $L$ in the mapping torus of the closed surface $\wh{\Sigma}$, 
we obtain \cyan{the assertion.}  
\end{proof}

If the meridians of the link $L$ are not parallel to the stable/unstable flow lines, then the $\infty$-Fried surgery is defined. So, we may directly apply \cite[Theorem 1.2] {McMullen2013CM} 
to the result of the $\infty$-Fried surgery on the result of the $0$-filling, to obtain the result. 
However, for instance, the figure-eight knot $4_1$ is not the case. 
Indeed, the $\infty$-blow down yields four singular points on the figure-eight knot; two of them are attracting and the other two are repelling. 

We also remark that even if the $\infty$-Fried surgeries are not defined, we may still play an analogue of number theory on the foliated dynamical systems by adding Reeb components, 
as described in 
\cite{KimMorishitaNodaTerashima2021}.\\

The Chebotarev link $\mca{L}=\cup_i K_i$ obtained by Theorem \ref{thm.Cheb} is called 
\emph{the planetary link} rising from $L$, after Birman--Williams \cite{BirmanWilliams1983CM}. 
Ghrist and others proved that if $L$ belongs to a certain large class of links (eg, the figure-eight knot, 
the Borromean ring, and every fibered non-torus 2-bridge knot), then the planetary link $\mca{L}$ contains all types of links \cite{Ghrist1997, GhristHolmesSullivan1997book, GhristKin2004}. 
\cyan{Ghrist \cite[Section 5]{Ghrist1997} claims that the Whitehead link also belongs to this class, 
and it is suggested as a reader's exercise in \cite[Remark 3.2.20]{GhristHolmesSullivan1997book}.}  

McMullen's construction \cite{McMullen2013CM} answers Mazur's question \cite{Mazur2012} on the existence of a sequence of knots in $S^3$ obeying the Chebotarev law. 
Morishita further asks the following: 
\begin{q} 
Does there exist a sequence of disjoint knots $(K_i)_i$ in $S^3$ obeying the Chebotarev law such that every type of knot appears in the sequence exactly once? 
\end{q} 
To answer his fused question in the affirmative, it suffices to show the equidistribution of each type of knot in the planetary link. 
Miller's work \cite{Miller2001EM} could give a cliff, which explicitly finds some types of geodesic knots. 


\section{Modular knots and Lorenz knots} \label{section.modular} 

We first recall the classical case.
Let $\bb{H}^2=\{z\in \C\mid {\rm Im}z>0\}$ denote the upper half plane. 
It is well-known that 
the unit tangent bundle $T_1({\rm PSL}_2\Z \backslash \mathbb{H}^2)$ of the modular orbifold ${\rm PSL}_2\Z \backslash \mathbb{H}^2$ 
is \cyan{naturally isometric to} the quotient space ${\rm PSL}_2\Z\backslash {\rm PSL}_2\R\cong {\rm SL}_2\Z\backslash {\rm SL}_2\R$ and is homepmorphic to the exterior of a trefoil $K_{2,3}=3_1$ in $S^3$. 
\cyan{\emph{The geodesic flow} on $T_1({\rm PSL}_2\Z \backslash \mathbb{H}^2)$ may be seen as the flow on ${\rm SL}_2\Z\backslash {\rm SL}_2\R$}  
defined by multiplying $\spmx{e^t&0\\0&e^{-t}}$ on the right, and its closed orbits are called \emph{modular knots} around $K_{2,3}$. 

An element $\gamma \in \SL_2\Z$ is said to be hyperbolic if $|\tr \gamma|>2$ holds. 
For each primitive hyperbolic element $\gamma$, we may define the corresponding modular knot $C_\gamma$ by 
$C_\gamma(t)=M_\gamma \spmx{e^t&0\\0&e^{-t}}$ $(0\leq 0\leq \log \xi_\gamma)$, 
where $M_\gamma^{-1}\gamma M_\gamma=\spmx{\xi_\gamma&0\\0&\xi_\gamma^{-1}}$ with $\xi_\gamma>1$. 
Every modular knot may be presented by some primitive hyperbolic $\gamma$, so that there is a natural surjection from the set of hyperbolic primitive elements to that of modular knots.

The geodesic flow on ${\rm PSL}_2\Z\backslash {\rm PSL}_2\R$ is classically known to be Anosov with a dense orbit and its structure is finely studied. 
\cyan{We claim that} 
\emph{modular knots and the missing trefoil obey the Chebotarev law, if ordered by length in a generic metric.}

\red{Unlike the cases} of the suspension flows, the geodesic flow does not extend to a generalized pseudo-Anosov flow on the result of the \red{$\infty$}-filling resulting in $S^3$; in a natural extension of the flow, the missing trefoil consists of four heteroclinic orbits. 
\cyan{Even worse, since any surgery along the trefoil in $S^3$ results in a non-hyperbolic manifold (cf.\cite{Sullivan2013TrefoilSurgery}), 
the procedure in the previous sections does not apply.  
Nevertheless, we may verify this assertion by proving \Cref{thm.cusped} stated in Section 1, which is a partial extension of \Cref{thm.McM}:}

\begin{proof}[Proof of {\rm \Cref{thm.cusped}}] 
\cyan{
Let $\Gamma \leq {\rm SL}_2\R$ be an arbitrary Fuchsian group and let $T_1\Sigma$ denote the unit tangent bundle of the orbifold $\Sigma=\Gamma\backslash \bb{H}$, which may have cusps. 
Note that a lift of the geodesic flow is pseudo-Anosov. 
Recently, Tsang gave an explicit construction of Markov partition that applies to cusped orbifolds also \cite[Remark 6.7]{Tsang2023NY}. 
By substituting Tsang's result (see also \cite{KatokUgarcovici2007BAMS} for the case of ${\rm SL}_2\Z$) 
to the place in the proof of \cite[Theorem 1.2]{McMullen2013CM} (contained in the proof of \cite[Theorem 1.4]{McMullen2013CM}) where the existence of a Markov partition (a Markov section) for a pseudo-Anosov flow on a closed 3-manifold \cite{FathiShub1979} is used, 
we obtain an extension of \Cref{thm.McM} to such $T_1\Sigma$, that is, 
the closed orbits of the flow obey the Chebotarev law.}  

\cyan{
(1) If $T_1\Sigma=M-L$, then by the definition of the Chebotarev law, we may add the missing finite link $L$ to the family of closed orbits.}

\cyan{
(2) If $M-L$ is a finite cover of $T_1\Sigma$, then by noting that a Markov partition lifts to the covering space, 
we obtain a similar result.}  
\end{proof}


As explained in Section 1, for any pair $(a,b)$ of coprime integers, the notion of a modular knot of the $(a,b,\infty)$-triangle group $\Gamma_{a,b}$ is defined.  
\begin{proof}[Proof of \Cref{thm.Lorenz}]
\Cref{thm.cusped} (1) and (2) apply to modular knots in (i) $L(a,b)-\ol{K}_{a,b}$ and (ii) $S^3-K_{a,b}$ respectively, 
yielding the assertion. 
\end{proof} 

Here we give remarks on Lorenz knots.  
The classical Lorenz flow in $\R^3$ is defined by 
\[ \dfrac{dx}{dt}=10(y-x), \ \dfrac{dy}{dt}=28x-y-xz,\ \dfrac{dz}{dt}=xy-\dfrac{8}{3}z\]
\cite{Lorenz1963JAS}. 
This flow is well-known for its \emph{robustness} and the \emph{chaotic} behavior of its orbits, 
so it may suggest the existence of ``fate'' and ``butterfly effects'' at the same time.  
We have a certain geometric model for Lorenz flow as well (cf.\cite{BirmanWilliams1983CM}), and its closed orbits are called \emph{Lorenz knots}. 
By virtue of Tucker \cite{Tucker1999CRASP} and Ghys \cite{Ghys2007ICM}, the set of modular knots and the missing trefoil in $S^3$ is topologically equivalent to that of Lorenz knots, much of whose topological properties having been studied. 

The Lorenz flow also has closed orbits that are not Lorenz knots. 
Conversely, Bonatti--Pinsky \cite{BonattiPinsky2021} gives a parametrized family of Lorenz-like flows, and Pinsky \cite{Pinsky2023PNAS} points out that some of them contain a sublink that is topologically equivalent to the set of modular knots and the missing trefoil. 
A similar investigation 
to these Lorenz-like knots would be of further interest. 

\section{The Rademacher symbols} \label{section.Rademacher} 
The discriminant function $\displaystyle \Delta(z)=q\prod_{n=1}^{\infty}(1-q^n)^{24}$ with $q=e^{2\pi\sqrt{-1}z}$, $z\in \bb{H}^2$ is a well-known modular function of weight 12. 
\emph{The Dedekind symbol} $\Phi$ and \emph{the Rademacher symbol} $\Psi$ are the functions $\SL_2\Z\to \Z$ satisfying the equalities 
\[\log \Delta(\gamma z)-\log \Delta(z)= \left\{ \begin{array}{ll} 
6\log (-(cz+d)^2)+2\pi i\Phi(\gamma) & {\rm if} \ c\neq 0,\\
 2\pi i\Phi(\gamma) & {\rm if} \ c=0, 
 \end{array}\right. \] 
\[\Psi(\gamma)=\Phi(\gamma)-3{\rm sgn}(c(a+d))\] 
for any $\gamma=\spmx{a&b\\c&d}\in \SL_2\Z$ acting on $z\in \C$ via the M\"obius transformation $\gamma z=\dfrac{az+b}{cz+d}$. 
Here we take a branch of the logarithm so that $-\pi\leq {\rm Im}\log z <\pi$ holds. 
This $\Psi$ factors through the conjugacy classes of ${\rm PSL}_2\Z$. 
(We may find in many literatures various confusions about the convention of the Rademacher symbol. Our convention is based on Matsusaka's quite thorough investigation; See \cite{Matsusaka2024MathAnn}.)

The Rademacher symbol $\Psi$ is known to be a highly ubiquitous function. Indeed, Atiyah proved the equivalence of seven definitions rising from very distinct contexts \cite{Atiyah1987MA}, whereas Ghys gave further characterizations (cf.\cite{BargeGhys1992}) especially by using modular knots \cite[Sections 3.3--3.5]{Ghys2007ICM} (see also \cite[Appendix]{DukeImamogluToth2017Duke}), 
proving that \emph{for each primitive hyperbolic $\gamma \in \SL_2\Z$, the linking number between the modular knot $C_\gamma$ and the missing trefoil $K$ coincides with the Rademacher symbol}, namely, \[{\rm lk}(C_\gamma,K)=\Psi(\gamma)\] holds. 
Note that the length of the image of $C_\gamma$ on the modular orbifold is given by $\ell(\gamma)=2\log \xi_\gamma$, where $\xi_\gamma=\dfrac{|\tr \gamma|+\sqrt{(\tr \gamma)^2-4}}{2}$ denotes the larger eigenvalue of $\gamma$, hence the order defined by the length of $C_\gamma$ in some metric coincides with that defined by $|{\rm tr} \gamma|$. 
This order persists up to finite permutation under the filling and surgeries. 

Now let $(K_i)$ denote the sequence of modular knots and the missing trefoil ordered by length and suppose that $L_n=\cup_{i\leq n}K_i$ contains the missing trefoil $K$. 
Let $0\neq m\in \Z$. 
Applying the Chebotarev law for the surjective homomorphism $\rho:\pi_1(S^3-L_n)\surj \Z/m\Z; [C_\gamma]\mapsto {\rm lk}(K,C_\gamma)=\Psi(\gamma) \mod m$, we obtain the following equidistribution formula: 
\begin{cor} \label{cor.Rademacher} 
Suppose that $\gamma$ runs through primitive hyperbolic elements of $\SL_2\Z$. 
For any $m\in \Z_{>0}$ and $k \in \Z/m\Z$, we have the density equality 
\[\lim_{\nu \to \infty} \frac{ \#\{\gamma \mid |\tr \gamma|<\nu,\ \Psi(\gamma)=k {\rm \, \, in\, \, } \Z/m\Z \}}{\#\{\gamma \mid |\tr \gamma| <\nu\}}=\frac{1}{m}.\]
\end{cor}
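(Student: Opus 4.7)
The plan is to apply Theorem \ref{thm.Lorenz} to a specific surjection onto $\Z/m\Z$ that detects the linking number with the missing trefoil $K$, and then invoke Ghys's identity ${\rm lk}(C_\gamma,K)=\Psi(\gamma)$ to reinterpret the resulting Chebotarev density as the claimed equidistribution of the Rademacher symbol.

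First I would pin down the dictionary. Both $|\tr\gamma|$ and $\Psi(\gamma)$ are invariants of the ${\rm PSL}_2\Z$-conjugacy class of a primitive hyperbolic $\gamma$, so one may identify primitive hyperbolic elements (up to conjugacy) with modular knots $C_\gamma$. The length $\ell(\gamma)=2\log\xi_\gamma$ is strictly increasing in $|\tr\gamma|$ for $|\tr\gamma|>2$, so the ordering of modular knots by length in a generic metric agrees, up to finite permutation, with the ordering by $|\tr\gamma|$; this does not affect asymptotic densities.

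Next, fix $n$ large enough so that $L_n$ contains $K$. Since $H_1(S^3-L_n)$ is free abelian on the meridians $\mu_{K_i}$ of the components, the rule $\mu_K\mapsto 1$ and $\mu_{K_i}\mapsto 0$ for $K_i\neq K$ defines a surjection $\rho:\pi_1(S^3-L_n)\surj \Z/m\Z$. For any modular knot $C_\gamma$ disjoint from $L_n$, the identity $[C_\gamma]=\sum_i {\rm lk}(K_i,C_\gamma)[\mu_{K_i}]$ in $H_1(S^3-L_n)$ yields
\[
\rho([C_\gamma])\equiv {\rm lk}(K,C_\gamma)\equiv \Psi(\gamma)\pmod{m},
\]
the second congruence being Ghys's identity recalled in Section \ref{section.Rademacher}.

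Applying Theorem \ref{thm.Lorenz} to this $\rho$ and the singleton class $C=\{k\}\subset \Z/m\Z$ gives the Chebotarev density $1/m$ for indices $j$ with $\Psi(\gamma_j)\equiv k\pmod{m}$; discarding the finite initial segment $j\leq n$ and reparametrizing by the monotone counting function $N(T)=\#\{\gamma:|\tr\gamma|\leq T\}$ converts this index-density into the stated equidistribution in $|\tr\gamma|<\nu$. The main subtlety is purely bookkeeping: one must fix the convention that $\gamma$ ranges over conjugacy classes of primitive hyperbolic elements (otherwise each such class contains infinitely many matrices of a given trace, making the ratio ill-defined) and verify that the change of ordering from index-in-the-Chebotarev-sequence to $|\tr\gamma|$ preserves density. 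Both points follow immediately from the monotonicity of $\ell$ in $|\tr\gamma|$ and from Ghys's identification, so no genuinely hard step remains beyond what is already available.
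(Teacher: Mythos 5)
Your proposal is correct and follows essentially the same route as the paper: the author also fixes $n$ with $K\subset L_n$, applies the Chebotarev law to the surjection $\rho:\pi_1(S^3-L_n)\surj \Z/m\Z$ given by $[C_\gamma]\mapsto {\rm lk}(K,C_\gamma)=\Psi(\gamma)\bmod m$, and notes beforehand that the length order agrees with the order by $|\tr\gamma|$ up to finite permutation. Your extra remarks on surjectivity via $\mu_K\mapsto 1$ and on identifying primitive hyperbolic elements up to conjugacy with modular knots only make explicit what the paper leaves implicit.
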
  

\cyan{We remark that Simon recently gave a formula on the linking number of two modular knots \cite{Simon2022PhD, Simon2022linking-arXiv}, answering Ghys's question in \cite{Ghys2007ICM}. 
So, we may obtain a similar formula for every fixed modular knot.}   

Another distribution theorem is due to Sarnak--Mozzochi, proved by using 
the trace formula:  
\begin{thm}[\cite{Sarnak2008letter, Sarnak2010CMA, Mozzochi2013Israel}] \label{prop.Sarnak} 
Suppose that $\gamma$ runs through conjugacy classes of primitive hyperbolic elements in $\SL_2\Z$. 
Then for any $-\infty \leq a \leq b \leq \infty$, we have 
\[\lim_{\nu \to \infty} \dfrac{\#\{\gamma \mid \ell(\gamma) < \nu,\ a\leq \dfrac{\Psi(\gamma)}{{\rm \ell}(\gamma)}\leq b\}}{\#\{\gamma \mid \ell(\gamma) <\nu\}}
=\dfrac{{\rm Tan}^{-1}(\dfrac{\pi b}{3})-{\rm Tan}^{-1}(\dfrac{\pi a}{3})}{\pi}.\]
\end{thm} 
Changing the order of the orbits is an important issue in the context of dynamical systems. 
The study of Chebyshev bias \cite{RubinsteinSarnak1994} of prime numbers and prime geodesics is of a similar interest. 
We wonder whether these two equidistribution formulas above may be interpreted from a unified viewpoint.

Recently, 
Matsusaka and the author 
gave a thorough study of modular knots around any torus knot in $S^3$, 
establishing a generalization of Ghys's formula ${\rm lk}(C_\gamma,K)=\Psi(\gamma)$, 
as well as pointing out a version of \Cref{prop.Sarnak}
(\cite[Theorem A, Corollary 5.3]{MatsusakaUeki2023RMS}, see also \cite{vonEssenPhD, Burrin2022, BurrinEssen-arXiv}). 
This should be compared with Dehornoy--Pinsky's works on so-called templates and codings \cite{Dehornoy2015AGT, DehornoyPinsky2018ETDS}, \cyan{as is done by Matsusaka--Shin \cite{MatsusakaShin2024arXiv}}. 
So, every argument and question for classical modular knots above 
extends to the case around any torus knot. 


\section{\red{Further remarks}} \label{sec.remarks}
\subsection*{Stably Chebotarev}  
By comparing \Cref{thm.Cheb} (2) and \Cref{thm.cusped} (2), one might wonder if 
every Chebotarev link is stably Chebotarev, namely, 
\emph{if an infinite link $\mca{L}$ in a 3-manifold $M$ obeys the Chebotarev law if ordered by length in a generic metric and $h:N\to M$ is a finite branched cover along a finite sublink $L$ of $\mca{L}$, 
then the inverse image $h^{-1}(\mca{L})$ has the same property.}   
To discuss this assertion, the Hilbert ramification theory for 3-manifolds (\cite{Ueki1}, see also \cite[Chapter 5]{Morishita2012}) would play a role, but it turns out to be troublesome, 
since the covering degrees of the orbits affect the order of the orbits. 


\subsection*{The $p$-adic torsions of knots and the Lang--Trotter conjecture} 
Let $p$ be a prime number. 
Let $K$ be a knot in $S^3$ and let $(M_n\to S^3)_n$ denote the system of branched $\Z/n\Z$-covers. 
We proved in \cite{UekiYoshizaki-plimits} that the sequence $(|H_1(M_{p^n})_{\rm tor}|)_n$ of the homology torsion sizes converges in the ring $\Zp$ of $p$-adic integers. 
In addition, we pointed out that a knot $K$ with the $p$-adic limit value being 1 is an analog of a so-called supersingular prime of a fixed elliptic curve. 
It is known to be of high interest that the set of supersingular primes is an infinite set but its density in the set of all prime numbers is zero \cite{Elkies1987Invent}. This is a part of so-called the Lang--Trotter conjecture. 
To examine their analogues for Chebotarev links 
would be highly interesting also.

\subsection*{The Neukirch--Uchida theorem in anabelian geometry}
In the context of anabelian geometry, the reconstruction problem is one of the main concerns (cf.~\cite{MochizukiTAAG3}). 
An important cornerstone is known as Neukirch--Uchida theorem \cite[Chapter XII]{NSW} for number fields. 
In the spirit of arithmetic topology, 
we may formulate its analogue for a 3-manifold endowed with a Chebotarev link. 
Analogues of the Hilbert ramification theory and the Chebotarev density theorem play essential roles there \cite{GropperUekiWang-NU}. 

\section*{Acknowledgments} 
I would like to express my sincere gratitude to Yuta Nozaki, Hirokazu Maruhashi, Kimihiko Motegi, and Toshiki Matsusaka to whom I greatly owe the introductions to low-dimensional generalities, foliated dynamics, non-hyperbolic 0-surgeries, and the Rademacher symbols respectively. 
I also thank Masanori Morishita and Hirofumi Niibo for their cheerful communications during the COVID-19 situation, 
and Tali Pinsky for answering my query on Lorenz knots. 
I am 
grateful to the anonymous referees and experts of the journal for careful reading and sincere comments, 
pointing out several crucial errors in the previous version of this manuscript. 
The author has been partially supported by JSPS KAKENHI Grant Numbers JP19K14538 and JP23K12969. 
\bibliographystyle{amsalpha}
\bibliography
{ju.Fried.arXiv.v6.bbl}

\end{document}